    \DeclareSymbolFont{cyrletters}{OT2}{wncyr}{m}{n}
    \DeclareMathSymbol{\Sha}{\mathalpha}{cyrletters}{"58}
     \newtheorem{thm}{Theorem}[section]
     \newtheorem{cor}[thm]{Corollary}
     \newtheorem{lem}[thm]{Lemma}
     \newtheorem{dfn}[thm]{Definition}
     \newtheorem{rmk}[thm]{Remark}
     \theoremstyle{definition}
     \theoremstyle{remark}
     \numberwithin{equation}{section}
    \newcommand{\sm}{\left(\begin{smallmatrix}}
    \newcommand{\esm}{\end{smallmatrix}\right)}
    \newcommand{\mat}{\left(\begin{matrix}}
    \newcommand{\emat}{\end{matrix}\right)}
    \newcommand{\mbf}{\mathbf}
    \def\CC{\mathbb{C}}
    \def\HH{\mathbb{H}}
    \def\GL{\mathrm{GL}}
    \def\Mp{\mathrm{Mp}}
    \def\SL{\mathrm{SL}}
\begin{document}

    \title{Non-Vanishing of L-Functions of Vector-Valued Modular Forms}

   \author{Subong Lim and Wissam Raji}
\date{}

\maketitle

\begin{abstract}
Kohnen proved a non-vanishing result for $L$-functions associated to Hecke eigenforms of integral weights on the full group. 
In this paper, we show a non-vanishing result for the averages of $L$-functions associated with the orthogonal basis of the space of cusp forms of vector-valued modular forms of weight $k\in \frac12 \mathbb{Z}$ on the full group. We also show the existence of at least one basis element whose L-function does not vanish under certain conditions.
As an application, we generalize the result of Kohnen to $\Gamma_0(N)$ and prove the analogous result for Jacobi forms. 
\end{abstract} 


   
 \section{Introduction}  
 Vector-Valued modular forms have played a crucial role in the theory of modular forms.  In particular, Selberg used these forms to give an estimation for the Fourier coefficients of the classical modular forms \cite{S}. Moreover, vector-valued modular forms arise naturally in the theory of Jacobi forms, Siegel modular forms, and Moonshine. Some applications of vector-valued modular forms stand out in high-energy physics by mainly providing a method of differential equations in order to construct the modular multiplets, and also revealing the simple structure of the modular invariant mass models \cite{DL}. Other applications concerning vector-valued modular forms of half-integer weight seem to provide a simple solution to the Riemann-Hilbert problem for representations of the modular group \cite{BG}.
 
\par In \cite {K},\cite{KM2}, and \cite{KM}, Knopp and/or Mason gave a systematic development of the theory of vector-valued modular forms where they introduced the foundation of the space of these forms mainly through the introduction of vector-valued Poincar\'e series and vector-valued Eisenstein series leading to a better understanding of the space of vector-valued modular forms.  More recently, several algorithms for computing Fourier coefficients of vector-valued modular forms were determined in connection to Weil representations due to their importance in the Moonshine applications \cite{MR}.

\par On the other hand, $L$-functions of vector-valued modular forms play important role in the above-mentioned computations as well so it is natural to study them.  
In this paper, we show a non-vanishing result for averages of $L$-functions associated with vector-valued modular forms.  To illustrate, we let $\{f_{k,1}, \ldots, f_{k,d_k}\}$ be an orthogonal basis of the space of cusp forms $S_{k,\chi,\rho}$ with Fourier coefficients $b_{k,l,j}(n)$, where $\chi$ is a multiplier system of weight $k\in\frac12 \mathbb{Z}$ on $\mathrm{SL}_2(\mathbb{Z})$ and
$\rho:\SL_2(\mathbb{Z})\to \GL_m(\CC)$ is an $m$-dimensional unitary complex representation.
We let $t_0\in\mathbb{R}, \epsilon>0$, and $1\leq i\leq m$. 
  Then, there exists a constant $C(t_0, \epsilon, i)>0$ such that for $k>C(t_0, \epsilon, i)$ the function 
  \[
  \sum_{l=1}^{d_k} \frac{<L^*(f_{k,l}, s),\mbf{e}_i>}{(f_{k,l}, f_{k,l})} b_{k,l,i}(n_{i,0})
  \]
  does not vanish at any point $s=\sigma + it_0$ with $\frac{k-1}{2}<\sigma < \frac k2 - \epsilon$, where $<L^*(f_{k,l}, s),\mbf{e}_i>$ denotes the $i$th component of $L^*(f_{k,l}, s)$.
By using the integral weight case, we generalize a result of Kohnen in \cite{Koh} to $\Gamma_0(N)$ in Section \ref{Section5}. 
On the other hand, by using the half-integral weight case, we  prove the analogous result for Jacobi forms in Section \ref{Section6}.

 
 \section{Preliminaries}
  Let $k\in \frac12\mathbb{Z}$ and $\chi$ a unitary multiplier system of weight $k$ on $\Gamma$, i.e. $\chi:\mathrm{SL}_2(\mathbb{Z})\to\mathbb{C}$ satisfies the following conditions:
  \begin{enumerate}
  \item $|\chi(\gamma)| = 1$ for all $\gamma\in \mathrm{SL}_2(\mathbb{Z})$.
  \item $\chi$ satisfies the consistency condition
  \[
  \chi(\gamma_3) (c_3\tau + d_3)^k = \chi(\gamma_1)\chi(\gamma_2) (c_1\gamma_2\tau + d_1)^k (c_2\tau+d_2)^k,
  \]
  where $\gamma_3 = \gamma_1\gamma_2$ and $\gamma_i = \sm a_i&b_i\\c_i&d_i\esm\in \mathrm{SL}_2(\mathbb{Z})$ for $i=1,2$, and $3$.
  \end{enumerate}
  Let $m$ be a positive integer and $\rho:\mathrm{SL}_2(\mathbb{Z})\to \mathrm{GL}_m(\mathbb{C})$ a $m$-dimensional unitary complex representation.
  We assume that $\rho(-I)$ is the identity matrix, where $I$ denotes the identity matrix. 
  Let $\{\mbf{e}_1, \ldots, \mbf{e}_m\}$ denote the standard basis of $\mathbb{C}^m$. 
  For a vector-valued function $f = \sum_{j=1}^m f_j \mbf{e}_j$ on $\mathbb{H}$ and $\gamma\in \Gamma$, define a slash operator by
  \[
  (f|_{k,\chi,\rho}\gamma)(\tau):= \chi^{-1}(\gamma)(c\tau+d)^{-k} \rho^{-1}(\gamma) f(\gamma \tau).
  \]

  \begin{dfn} \label{defofvvm}
  A vector-valued modular form of weight $k$ and multiplier system $\chi$ with respect to $\rho$ on $\mathrm{SL}_2(\mathbb{Z})$ is a sum $f = \sum_{j=1}^m f_j \mbf{e}_j$ of functions holomorphic in  $\mathbb{H}$ satisfying the following conditions:
  \begin{enumerate}
  \item $f|_{k,\chi,\rho}\gamma = f$ for all $\gamma\in \mathrm{SL}_2(\mathbb{Z})$.
  \item For each $1\leq j\leq m$, each function $f_j$ has a Fourier expansion of the form
  \[
  f_i(\tau) = \sum_{n+\kappa_j\geq 0} a_j(n)e^{2\pi i(n+\kappa_j)\tau}.
  \]
  Here and throughout the paper, $\kappa_j$ is a certain positive number with $0\leq \kappa_j<1$. 
  \end{enumerate}
  We write $M_{k,\chi,\rho}$ for the space of vector-valued modular forms of weight $k$ and multiplier system $\chi$ with respect to $\rho$ on $\SL_2(\mathbb{Z})$.
   There is a subspace $S_{k,\chi,\rho}$ of vector-valued cusp forms for which we require that each $a_j(n) = 0$ when $n+\kappa_j$ is non-positive.
  \end{dfn}

From the condition (2) in Definition \ref{defofvvm}, we see that   $\chi\left(\sm 1&1\\0&1\esm\right) \rho\left(\sm 1&1\\0&1\esm\right)$ is a diagonal matrix whose $(j,j)$ entry is $e^{2\pi i\kappa_j}$.
 If $f\in S_{k, \chi, \rho}$ is a vector-valued cusp form,  then $a_j(n) = O(n^{k/2})$ for every $1\leq j\leq m$, as $n\to\infty$ by the same argument for classical modular forms (for example, see \cite[Section 1]{KM}).
For a vector-valued cusp form $f(z) = \sum_{j=1}^m \sum_{n+\kappa_j>0}a_j(n)e^{2\pi i(n+\kappa_j)z}\mbf{e}_j$ we define the $L$-series
\[L(f,s) = \sum_{j=1}^m  \sum_{n+\kappa_j>0} \frac{a_j(n)}{(n+\kappa_j)^s}\mbf{e}_j.\]
This series converges absolutely for  $\mathrm{Re}(s)\gg0$.

The following theorem for vector-valued modular forms follows from the same argument used for classical modular forms. 

\begin{thm} \label{cusp} 
 Let $k\in \frac12\mathbb{Z}$.
If  $f\in S_{k,\rho}$ is a vector-valued cusp form, then 
\[\frac{\Gamma(s)}{(2\pi)^s}L(f,s) = \int_{0}^\infty f(iy)y^s\frac{dy}{y}.\]
Furthermore, $L(f,s)$ has an analytic continuation  and functional equation
\[ L^*(f,s) =i^k \chi(S) \rho(S)L^*(f, k-s),\]
where $$L^*(f,s) =\frac{\Gamma(s)}{(2\pi)^s}L(f,s)$$ and $S = \sm 0&-1\\1&0\esm$. 
\end{thm}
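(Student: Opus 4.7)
The plan is to adapt the classical Hecke/Mellin proof to the vector-valued setting, working component by component. For the Mellin identity I would expand $f(iy)=\sum_j f_j(iy)\mbf{e}_j$ via the Fourier series $f_j(iy)=\sum_{n+\kappa_j>0}a_j(n)e^{-2\pi(n+\kappa_j)y}$. Because $f$ is a cusp form, the smallest exponent $n+\kappa_j$ is strictly positive, which gives rapid decay as $y\to\infty$, while the bound $a_j(n)=O(n^{k/2})$ controls the series on bounded intervals. For $\mathrm{Re}(s)$ large Fubini then permits interchanging sum and integral, and the identity $\int_0^\infty e^{-2\pi(n+\kappa_j)y}y^s\tfrac{dy}{y}=\Gamma(s)(2\pi(n+\kappa_j))^{-s}$ immediately produces the stated formula $L^*(f,s)=\int_0^\infty f(iy)y^s\tfrac{dy}{y}$.

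For the continuation and functional equation I would use the Riemann splitting trick, writing
\[
L^*(f,s) = \int_1^\infty f(iy)y^s\frac{dy}{y} + \int_0^1 f(iy)y^s\frac{dy}{y}.
\]
The first integral is entire in $s$ by exponential decay of $f$ at the cusp. In the second, the modular transformation $f|_{k,\chi,\rho}S=f$ unwinds to $f(i/y)=i^k\chi(S)\rho(S)\,y^k f(iy)$, so the substitution $y\mapsto 1/y$ converts it to $i^k\chi(S)\rho(S)\int_1^\infty f(iy)y^{k-s}\tfrac{dy}{y}$, again entire in $s$. Combining yields
\[
L^*(f,s) = \int_1^\infty f(iy)\bigl(y^s + i^k\chi(S)\rho(S)y^{k-s}\bigr)\frac{dy}{y},
\]
which simultaneously gives the analytic (in fact entire) continuation, and, upon replacing $s$ by $k-s$ and left-multiplying by $i^k\chi(S)\rho(S)$, the claimed functional equation.

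The only non-routine point I anticipate is checking that the constant matrix $i^k\chi(S)\rho(S)$ is an involution, which is what makes the last display symmetric in the sense required. Since $S^2=-I$ and $\rho(-I)=I$ by hypothesis, this reduces to verifying $i^{2k}\chi(S)^2=1$. Applying the consistency condition for $\chi$ to the factorisation $S\cdot S=-I$ gives $\chi(S)^2=\chi(-I)$, and the invariance $f|_{k,\chi,\rho}(-I)=f$ together with $\rho(-I)=I$ forces $\chi(-I)=i^{-2k}$. Beyond this small bookkeeping item, everything runs exactly parallel to the scalar Hecke argument, with the matrix $\chi(S)\rho(S)$ playing the role of the scalar root number.
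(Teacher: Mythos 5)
Your proposal is correct and is precisely the argument the paper has in mind: the paper gives no written proof, stating only that the theorem ``follows from the same argument used for classical modular forms,'' and your Mellin-transform-plus-Riemann-splitting write-up is exactly that classical Hecke argument transplanted to the vector-valued setting, including the correct unwinding $f(i/y)=i^k\chi(S)\rho(S)y^kf(iy)$ from the slash-invariance under $S$. Your closing verification that $\bigl(i^k\chi(S)\rho(S)\bigr)^2=I$ via $S^2=-I$, $\rho(-I)=I$, and the consistency condition is the right bookkeeping to make the functional equation an involution.
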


\section{The Construction of the Kernel Function}
In what follows, we define the kernel function $R_{k,s,l}$ which will play a crucial role in determining the Fourier coefficients of the orthogonal basis of the space of vector-valued cusp forms using Petersson's scalar product. Moreover, we determine the Fourier coefficients of this kernel function using the Lipshitz summation formula.
\par Let $l$ be an integer with $1\leq l\leq m$.
Define 
\[
p_{s,l} (\tau) := \tau^{-s}\mbf{e}_l.
\]
For $\tau\in\mathbb{H}$ and $s\in\mathbb{C}$ with $1<\mathrm{Re}(s)<k-1$, we define
\[
R_{k, s, l} := \gamma_k(s) \sum_{\gamma \in \mathrm{SL}_2(\mathbb{Z})}   p_{s,l} |_{k,\chi,\rho}\gamma,
\]
where $\gamma_k(s) := \frac12 e^{\pi is/2} \Gamma(s) \Gamma(k-s)$.

We write $<\cdot, \cdot>$ for the standard scalar product on $\mathbb{C}^m$, i.e. 
\[
<\sum_{j=1}^m \lambda_j \mbf{e}_j, \sum_{j=1}^m \mu_j \mbf{e}_j> = \sum_{j=1}^m \lambda_j \overline{\mu_j}.
\]
Then, for $f, g\in M_{k,\rho}$, we define the Petersson scalar product of $f$ and $g$ by
\[
(f,g) := \int_{\mathcal{F}} <f(\tau), g(\tau)> v^k \frac{dudv}{v^2}
\]
if the integral converges, where $\mathcal{F}$ is the standard fundamental domain for the action of $\mathrm{SL}_2(\mathbb{Z})$ on $\mathbb{H}$.

\begin{lem} \label{Petersson}
 Let $k\in \frac12\mathbb{Z}$ with $k>2$, and let $s\in\mathbb{C}$ with $1<\mathrm{Re}(s)<k-1$.
\begin{enumerate}
\item
The series $R_{k,s,l}$ converges absolutely uniformly whenever $\tau = u+iv$ satisfies $v\geq \epsilon, u\leq 1/\epsilon$ for a given $\epsilon>0$, and $s$ varies over a compact set. 

\item The series $R_{k,s,l}$ is a vector-valued cusp form in $S_{k,\chi,\rho}$. 

\item For $f\in S_{k,\chi,\rho}$, we have
\[
(f, R_{k,\bar{s}, l}) = c_k <L^*(f,s), \mbf{e}_l>,
\]
where $c_k:= \frac{(-1)^{k/2} \pi (k-2)!}{2^{k-2}}$.

\end{enumerate}
\end{lem}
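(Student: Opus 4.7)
The plan is to treat $R_{k,s,l}$ as a Poincar\'e-type series summed over all of $\mathrm{SL}_2(\mathbb{Z})$---no $\Gamma_\infty$-quotient is available, since $p_{s,l}(\tau) = \tau^{-s}\mbf{e}_l$ is not translation-invariant---in the spirit of the Cohen--Kohnen--Zagier period kernel. For part (1) I would start from the explicit slash action
\[
(p_{s,l}|_{k,\chi,\rho}\gamma)(\tau) = \chi^{-1}(\gamma)(c\tau+d)^{-k}\rho^{-1}(\gamma)(\gamma\tau)^{-s}\mbf{e}_l,
\]
and use $|\chi^{-1}(\gamma)| = 1$, the unitarity of $\rho$, and $|\gamma\tau|^{-\sigma}|c\tau+d|^{-\sigma} = |a\tau+b|^{-\sigma}$ (with $\sigma = \mathrm{Re}(s)$) to bound each summand in norm by $|a\tau+b|^{-\sigma}|c\tau+d|^{\sigma-k}$. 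The classical double-sum estimate for $\sum_\gamma |a\tau+b|^{-\sigma}|c\tau+d|^{\sigma-k}$ then gives absolute, locally uniform convergence precisely in the strip $1 < \sigma < k-1$.

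Part (2) follows in a fairly standard way. Convergence implies holomorphy in $\tau$, and the invariance $R_{k,s,l}|_{k,\chi,\rho}\gamma_0 = R_{k,s,l}$ comes from reindexing $\gamma \mapsto \gamma\gamma_0^{-1}$ together with the cocycle identity for $\chi$ and the homomorphism property of $\rho$. The subtler point is the cusp form condition; here I would combine the tail estimate from (1), which forces the $c \neq 0$ part to decay as $v \to \infty$, with the Lipschitz summation formula applied to the $c=0$ contribution (the sum over integer translations $\tau \mapsto \tau + b$), producing a Fourier expansion of $R_{k,s,l}$ at $\infty$ supported on strictly positive frequencies. This is in fact the very expansion the paper announces it will compute explicitly in the next step.

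For (3) the tool is the standard unfolding of the Petersson integral. Because $\chi$ and $\rho$ are unitary, the density $\langle f, g\rangle v^k$ is $\mathrm{SL}_2(\mathbb{Z})$-invariant, and unfolding collapses $\sum_\gamma \int_\mathcal{F}$ into a constant multiple of $\int_\mathbb{H}$ (the constant recording the $\{\pm I\}$-stabilizer of $p_{s,l}$). Up to $\overline{\gamma_k(\bar s)}$ this yields
\[
\int_0^\infty v^{k-2} \int_{-\infty}^\infty f_l(u+iv)\,(u-iv)^{-s}\,du\, dv.
\]
Substituting $f_l(\tau) = \sum a_l(n) e^{2\pi i(n+\kappa_l)\tau}$, the inner $u$-integral evaluates by the classical Fourier--Lipschitz identity to a constant times $N^{s-1} e^{-2\pi N v}$ with $N = n+\kappa_l$, and the outer $v$-integral is then a Gamma function of the form $\Gamma(k-s)/(4\pi N)^{k-s}$. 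Combined with the definitions of $\gamma_k(\bar s)$ and of $L^*(f,s)$, the product collapses exactly to $c_k \langle L^*(f,s), \mbf{e}_l\rangle$.

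The main obstacle will be the bookkeeping of phases and branch choices in the inner Fourier--Lipschitz integral, particularly for half-integer $k$: the $e^{\pm i\pi s/2}$ phases, the consistency condition on $\chi(-I)$ needed for $\{\pm I\}$ to double-count rather than cancel, and the precise powers of $2$ and $\pi$ all need to line up to produce exactly the constant $c_k = (-1)^{k/2}\pi(k-2)!/2^{k-2}$. The convergence, modular invariance, and unfolding itself are mechanical; the delicate part is the final constant-matching.
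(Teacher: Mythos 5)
Your parts (1) and (2) match the paper's treatment: the paper likewise bounds each summand using $|\chi^{-1}(\gamma)|=1$ and the unitarity of $\rho$, reduces to the known scalar estimate for $\sum_{\gamma}|(c\tau+d)^{-k}(\gamma\tau)^{-s}|$ (citing Kohnen), and dismisses (2) as an immediate consequence. For part (3) you take a genuinely different route. The paper never unfolds the Petersson integral of $f$ against $R_{k,\bar s,l}$ directly; instead it regroups the sum over $\mathrm{SL}_2(\mathbb{Z})$ as a double sum over coprime pairs $(c,d)$ and translations, applies the Lipschitz summation formula to the inner sum, and thereby writes $R_{k,s,l}=(2\pi)^s\Gamma(k-s)\sum_{n+\kappa_l>0}(n+\kappa_l)^{s-1}P_{k,n,l}$ as an infinite combination of vector-valued Poincar\'e series, after which it quotes the coefficient formula $(f,P_{k,n,l})=a_n(l)\Gamma(k-1)/(4\pi(n+\kappa_l))^{k-1}$ and sums the resulting Dirichlet series. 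Your single unfolding is the more direct version of the same computation (the Poincar\'e coefficient formula is itself proved by unfolding, and your Fourier--Lipschitz integral is the continuous form of the Lipschitz summation the paper uses); what the paper's detour buys is the explicit Poincar\'e-series decomposition of $R_{k,s,l}$, which resurfaces in the remark following the main theorem.

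One substantive caveat on your part (3): your stated $v$-integral is off. After the inner $u$-integral contributes a constant times $N^{s-1}e^{-2\pi Nv}$ and $f_l$ contributes another $e^{-2\pi Nv}$, the outer integral is $\int_0^\infty v^{k-2}e^{-4\pi Nv}\,dv=\Gamma(k-1)/(4\pi N)^{k-1}$, not $\Gamma(k-s)/(4\pi N)^{k-s}$; the $\Gamma(k-s)$ in the final answer comes from $\overline{\gamma_k(\bar s)}$, not from this integral. Consequently the total exponent of $N$ is $s-k$, so the unfolding lands on a multiple of $<L^*(f,k-s),\mbf{e}_l>$, and converting this into $c_k<L^*(f,s),\mbf{e}_l>$ requires the functional equation of Theorem 2.2 --- which in the vector-valued setting involves the matrix $i^k\chi(S)\rho(S)$ and hence mixes components, so it is not a purely scalar sign flip. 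The paper's own computation faces exactly the same final step (it also arrives at $L(f,k-s)$ and converts silently), so this is not a defect of your route relative to theirs, but it is the one place where the ``constant-matching'' you defer is more than bookkeeping.
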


\begin{proof}
For the first part, note that for each $1\leq j\leq m$, we have
\[
(R_{k,s,l})_j (\tau) = \gamma_k(s) \sum_{\sm a&b\\c&d\esm\in \SL_2(\mathbb{Z})} \chi^{-1}\left(\sm a&b\\c&d\esm\right)   \rho^{-1}\left(\sm a&b\\c&d\esm\right)_{j,l} (c\tau+d)^{-k}\left( \frac{a\tau+b}{c\tau+d}\right)^{-s},
\]
where $\rho^{-1}\left(\sm a&b\\c&d\esm\right)_{j,l}$ denotes the $(j,l)$-th entry of the matrix $\rho^{-1}\left(\sm a&b\\c&d\esm\right)$.
Then, we have
\[
 \sum_{\sm a&b\\c&d\esm\in \SL_2(\mathbb{Z})} \left| \chi^{-1}\left(\sm a&b\\c&d\esm\right)\rho^{-1}\left(\sm a&b\\c&d\esm\right)_{j,l} (c\tau+d)^{-k}\left( \frac{a\tau+b}{c\tau+d}\right)^{-s}\right| \leq \sum_{\sm a&b\\c&d\esm \in \mathrm{SL}_2(\mathbb{Z})} \left| (c\tau+d)^{-k} \left( \frac{a\tau+b}{c\tau+d}\right)^{-s}\right|
 \]
since $\rho$ is a unitary representation.
It is known that the series 
\[
 \sum_{\sm a&b\\c&d\esm \in \SL_2(\mathbb{Z})} (c\tau+d)^{-k} \left( \frac{a\tau+b}{c\tau+d}\right)^{-s}
\]
converges absolutely uniformly whenever  $\tau = u+iv$ satisfies $v\geq \epsilon, u\leq 1/\epsilon$ for a given $\epsilon>0$, and $s$ varies over a compact set (see \cite[Section 4]{Koh}).
Therefore, the series $R_{k,s,l}$ converges absolutely uniformly whenever $\tau = u+iv$ satisfies $v\geq \epsilon, u\leq 1/\epsilon$ for a given $\epsilon>0$, and $s$ varies over a compact set. 
The second part follows readily from that.

For the last part, we follow the argument in \cite[Lemma 1]{Koh}.
It is enough to consider the case when $1<\mathrm{Re}(s) < \frac{k-1}{2}$.
Note that for each $(c,d)\in\mathbb{Z}^2$ with $(c,d)=1$, we can find $(a, b)\in\mathbb{Z}^2$ such that $a d - b c = 1$. 
Then, we see that $R_{k,s,l}(\tau)$ is equal to 
\[
\gamma_k(s) \sum_{\substack{(c,d)\in\mathbb{Z}^2 \\ (c,d)=1}} \sum_{n\in\mathbb{Z}} (c\tau+d)^{-k} \left(\frac{a\tau + b}{c\tau+d} + n\right)^{-s} 
\chi^{-1}\left( \sm a&b\\c&d\esm \right)\chi^{-1}\left(\sm 1&n\\0&1\esm \right)
\rho^{-1}\left( \sm a&b\\c&d\esm \right)\rho^{-1}\left(\sm 1&n\\0&1\esm \right)  \mbf{e}_l,
\]
where for each coprime pair $(c,d)\in\mathbb{Z}^2$, one chooses a fixed pair $(a,b)\in \mathbb{Z}^2$ such that $ad-bc=1$.
Therefore, we have
\[
(R_{k,s,l})_j(\tau) = \gamma_k(s) \sum_{\substack{(c,d)\in\mathbb{Z}^2 \\ (c,d)=1}} \sum_{n\in\mathbb{Z}} (c\tau+d)^{-k} \left(\frac{a\tau + b}{c\tau+d} + n\right)^{-s} e^{-2\pi in \kappa_l} \chi^{-1}\left(\sm a&b\\c&d\esm\right) \rho^{-1}\left( \sm a&b\\c&d\esm \right)_{j,l}. 
\]
Next, we will use the Lipschitz summation formula \cite{Lip}: 
For $0\leq a\leq 1, \mathrm{Re}(s)>1$ and $\tau\in\mathbb{H}$, we have
\begin{equation} \label{Lipschitz}
\frac{\Gamma(s)}{(-2\pi i)^s} \sum_{k\in\mathbb{Z}} \frac{e^{2\pi iak}}{(k+\tau)^s} = \sum_{n=1}^\infty \frac{e^{2\pi i\tau(n-a)}}{(n-a)^{1-s}}.
\end{equation}
Therefore, we have
\begin{eqnarray*}
(R_{k,s,l})_j(\tau) &=&  \frac{1}{2} (2\pi)^s \Gamma(k-s) \sum_{n+\kappa_l>0} (n+\kappa_l)^{s-1} \\
&&\times\sum_{\substack{ (c,d)\in\mathbb{Z}^2 \\ (c,d)=1}}\chi^{-1}\left(\sm a&b\\c&d\esm\right) 
\rho^{-1}\left( \sm a&b\\c&d\esm \right)_{j,l} (c\tau+d)^{-k} e^{2\pi i(n+\kappa_l)(a \tau+b)/(c\tau+d)}.
\end{eqnarray*}
From this, we have
\begin{eqnarray*}
R_{k,s,l}(\tau) = (2\pi)^s \Gamma(k-s) \sum_{n+\kappa_l>0} (n+\kappa_l)^{s-1}P_{k,n,l}(\tau),
\end{eqnarray*}
where 
\[
P_{k,n,l}(\tau) = \frac12 \sum_{\substack{ (c,d)\in\mathbb{Z}^2 \\ (c,d)=1}}\chi^{-1}\left(\sm a&b\\c&d\esm\right) \rho^{-1}\left( \sm a&b\\c&d\esm \right) (c\tau+d)^{-k} e^{2\pi i(n+\kappa_l)(a \tau+b)/(c\tau+d)} \mbf{e}_l
\]
is a vector-valued Poincar\'e series.
By following the argument as in \cite[Theorem 5.3]{KM}, we have
\[
(f, P_{k,n,l}) = a_{n}(l) \frac{\Gamma(k-1)}{(4\pi (n+\kappa_l))^{k-1}}.
\]
Therefore, we see that
\begin{eqnarray*}
(f, R_{k, \bar{s}, l}) &=& c_k (-1)^{k/2} (2\pi)^{-(k-s)} \Gamma(k-s) <L(f, k-s), \mbf{e}_l>\\
&=& c_k <L^*(f,s), \mbf{e}_l>.
\end{eqnarray*}
\end{proof}

  We now compute the Fourier expansion of $R_{k,s,l}$ by following a similar argument as in \cite[Lemma 2]{Koh}.

\begin{lem} \label{Fourier}
 Let $k\in \frac12\mathbb{Z}$ with $k>2$.
The function $R_{k,s,l}$ has the Fourier expansion
\[
R_{k,s,l}(\tau) = \sum_{j=1}^m \sum_{n+\kappa_j>0} r_{k,s,l,j}(n)e^{2\pi i(n+\kappa_j)\tau},
\]
where $r_{k,s,l,j}(n)$ is given by
\begin{eqnarray*}
r_{k,s,l,j}(n) &=& \delta_{l,j}  (2\pi)^s \Gamma(k-s)(n+\kappa_l)^{s-1} + \chi^{-1}(S)   \rho^{-1}\left( S \right)_{j,l} (-1)^{k/2}(2\pi)^{k-s}\Gamma(s) (n+\kappa_j)^{k-s-1}\\
&&+\frac{ (-1)^{k/2}}{2} (2\pi)^k (n+\kappa_j)^{k-1} \frac{\Gamma(s)\Gamma(k-s)}{\Gamma(k)}\sum_{\substack{(c,d)\in\mathbb{Z}^2 \\ (c,d)=1, ac>0}}    c^{-k} \left( \frac ca \right)^{s}\\
&&\quad\times \bigg(e^{2\pi i(n+\kappa_j)d/c} e^{\pi is}\chi^{-1}\left( \sm a&b\\c&d\esm\right) \rho^{-1}\left( \sm a&b\\c&d\esm\right)_{j,l}\ _1F_1(s, k; -2\pi in/(ac))\\
&&\quad \quad + e^{-2\pi i(n+\kappa_j)d/c} e^{-\pi is} \chi^{-1}\left( \sm -a&b\\c&-d\esm\right) \rho^{-1} \left( \sm -a&b\\c&-d\esm\right)_{j,l}\ _1F_1(s, k; 2\pi in/(ac)) \bigg),
\end{eqnarray*}
where $_1 F_1(\alpha, \beta;z)$ is Kummer's degenerate hypergeometric function.
\end{lem}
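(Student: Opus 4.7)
The plan is to partition the sum $R_{k,s,l}(\tau) = \gamma_k(s)\sum_\gamma (p_{s,l}|_{k,\chi,\rho}\gamma)(\tau)$ according to the bottom-left entry $c$ of $\gamma = \sm a&b\\c&d\esm$, producing three contributions that match the three summands in the claimed formula. For $c = 0$ the matrices are $\pm T^b$; the factor of $2$ from the $\pm$ cancels the $\tfrac12$ in $\gamma_k(s)$, and using $\rho(-I) = I$ together with the diagonal form of $\chi(T)\rho(T)$ that follows from Definition \ref{defofvvm}, this piece reduces to $\delta_{l,j}\,e^{\pi is/2}\Gamma(s)\Gamma(k-s)\sum_{b\in\ZZ}e^{-2\pi i\kappa_l b}(\tau+b)^{-s}$; applying the Lipschitz formula \eqref{Lipschitz} with parameter $-\kappa_l$ then yields the first summand $\delta_{l,j}(2\pi)^s\Gamma(k-s)(n+\kappa_l)^{s-1}$. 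For $a = 0$, $c \neq 0$ the matrices are $\pm ST^d$, and an entirely analogous Lipschitz argument---with $S$ now contributing the prefactor $\chi^{-1}(S)\rho^{-1}(S)_{j,l}$ after the $T^d$-cocycle is extracted---produces the second summand, with the roles of $s$ and $k-s$ swapped because $S$ interchanges $c\tau+d$ with $-(a\tau+b)$ and the $(-1)^{k/2}$ coming from the branch factor $(-1)^{-k+s}$ combining with $e^{\pi is/2}$ in $\gamma_k(s)$.

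For the remaining matrices, with $a, c \neq 0$, I parametrize by $c \in \ZZ\setminus\{0\}$ and $d_0$ ranging over residues modulo $|c|$ coprime to $c$, fixing $(a_0, b_0) \in \ZZ^2$ with $a_0 d_0 - b_0 c = 1$ for each pair $(c, d_0)$. Every such $\gamma$ equals $\gamma_0 T^n$ for a unique $n \in \ZZ$, where $\gamma_0 = \sm a_0 & b_0\\ c & d_0\esm$, and the cocycle identity for $\chi$ together with the diagonality of $\rho(T)$ gives $\chi^{-1}(\gamma_0 T^n)\rho^{-1}(\gamma_0 T^n)_{j,l} = e^{-2\pi in\kappa_j}\chi^{-1}(\gamma_0)\rho^{-1}(\gamma_0)_{j,l}$. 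Consequently the inner sum over $n$ becomes
\[
\chi^{-1}(\gamma_0)\rho^{-1}(\gamma_0)_{j,l}\, c^{s-k}a_0^{-s}\sum_{n\in\ZZ}e^{-2\pi in\kappa_j}(u+n)^{s-k}\bigl(u - \tfrac{1}{a_0c} + n\bigr)^{-s},
\]
with $u = \tau + d_0/c$, using the identity $d_0/c - b_0/a_0 = 1/(a_0c)$. Unfolding $\int_0^1(\cdots)e^{-2\pi i(m+\kappa_j)u}du$ in the standard way then identifies the $(m+\kappa_j)$-th Fourier coefficient as $e^{2\pi i(m+\kappa_j)d_0/c}\,\widehat{g}(m+\kappa_j)$, where $\widehat{g}(\lambda) = \int u^{s-k}(u - 1/(a_0c))^{-s}e^{-2\pi i\lambda u}du$ along a contour in $\HH$. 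A classical beta-integral computation---writing each factor via its gamma-integral representation and interchanging integrals---expresses $\widehat{g}$ in terms of Kummer's $_1F_1(s, k; \cdot)$; folding the sum over $c > 0$ together with its $c < 0$ counterpart via $\gamma_0 \mapsto -\gamma_0$ then produces the two complementary $_1F_1(s, k; \pm 2\pi in/(ac))$ terms with phases $e^{\pm\pi is}$ appearing in the third summand.

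The main obstacle will be this last Fourier-transform evaluation: expressing $\widehat{g}(\lambda)$ explicitly as a Kummer function with the correct $\Gamma(s)\Gamma(k-s)/\Gamma(k)$ normalization, making consistent branch choices for $a_0^{-s}$ and $c^{s-k}$ when $a_0$ or $c$ is negative, and reconciling the pairs with $c > 0$ and $c < 0$ into the single condition $ac > 0$ appearing in the statement. A secondary bookkeeping challenge is verifying that the $T^n$-cocycle factors of $\chi$ combine with the eigenvalues $\rho^{-1}(T^n)_{j,j}$ to produce exactly the exponential $e^{-2\pi in\kappa_j}$ needed for the Poisson-type summation to shift the Fourier index from $m$ to $m + \kappa_j$.
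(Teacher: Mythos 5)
Your proposal follows essentially the same route as the paper: the terms with $ac=0$ (the $\pm T^b$ and $\pm ST^d$ families) are summed via the Lipschitz formula to give the first two summands, and the $ac\neq 0$ terms are grouped as $\gamma_0 T^n$, unfolded into a Fourier-coefficient integral over a full horizontal line, shifted by $d/c$, and evaluated as a Kummer $_1F_1$ via the standard contour/beta-integral identity (which the paper simply quotes from Erd\'elyi's tables rather than rederiving). The obstacles you flag---branch choices, the $\Gamma(s)\Gamma(k-s)/\Gamma(k)$ normalization, and folding $c>0$ with $c<0$ into the condition $ac>0$---are exactly the points the paper's computation handles, so the plan is sound and matches the published argument.
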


\begin{proof}
  First, we consider the contribution of the terms where $ac = 0$. 
  The contribution of the terms $\pm \sm 1&n\\0&1\esm$ 
  \[
  2\gamma_k(s) \sum_{n\in\mathbb{Z}} (z+n)^{-s}e^{-2\pi n\kappa_l} \mbf{e}_l
  \]
  can be written as follows by the Lipschitz summation formula in (\ref{Lipschitz}) 
  \[
  (2\pi)^s \Gamma(k-s) \sum_{n+\kappa_l>0} (n+\kappa_l)^{s-1} e^{2\pi i(n+\kappa_l) z} \mbf{e}_l.
  \]
  Note that $  \sm 0&-1\\1&n \esm = \sm 0&-1\\1&0\esm \sm 1&n\\0&1\esm$.
  Therefore, the contribution of the terms $\pm \sm 0&-1\\1&n\esm$ is equal to 
  \begin{eqnarray} \label{anonzero}
 \nonumber && 2\gamma_k(s) \sum_{n\in\mathbb{Z}} (-z)^{-s} (z+n)^{s-k} 
   \chi^{-1}\left(\sm 1&n\\0&1\esm\right) \chi^{-1} \left( \sm 0&-1\\1&0\esm\right)
  \rho^{-1}\left(\sm 1&n\\0&1\esm\right) \rho^{-1} \left( \sm 0&-1\\1&0\esm\right)\mbf{e}_l\\
  &&= 2\gamma_k(s) (-z)^{-s} \sum_{j=1}^m \chi^{-1} \left( \sm 0&-1\\1&0\esm\right)\rho^{-1}\left( \sm 0&-1\\1&0\esm \right)_{j,l} \sum_{n\in\mathbb{Z}}  e^{-2\pi in\kappa_j}(z+n)^{s-k}\mbf{e}_j.
  \end{eqnarray}
  By the similar computation as in the case of the terms  $\pm \sm 1&n\\0&1\esm$, we see that (\ref{anonzero}) is equal to 
  \[
  (-1)^{k/2}(2\pi)^{k-s}\Gamma(s)\sum_{j=1}^m \chi^{-1} \left( \sm 0&-1\\1&0\esm\right) \rho^{-1}\left( \sm 0&-1\\1&0\esm \right)_{j,l}  \sum_{n+\kappa_j>0} (n+\kappa_j)^{k-s-1}e^{2\pi i(n+\kappa_j)z} \mbf{e}_j.
  \]
  
  The contribution of the terms with $ac\neq0$ at the $j$-th component is given by
  \begin{eqnarray} \label{contribution}
 &&\gamma_{k}(s)\int_{iC}^{iC+1} \sum_{\substack{(c,d)\in\mathbb{Z}^2 \\ (c,d)=1, ac\neq0}} (cz+d)^{-k} \left(\frac{az+d}{cz+d}\right)^{-s}
 \chi^{-1}\left( \sm a&b\\c&d\esm\right)
 \rho^{-1}\left( \sm a&b\\c&d\esm\right)_{j,l}  e^{-2\pi i(n+\kappa_j)z} dz\\
\nonumber   &&=\gamma_k(s)\int_{iC}^{iC+1}   \sum_{m\in\mathbb{Z}} \sum_{\substack{(c,d)\in\mathbb{Z}^2 \\ (c,d)=1, ac\neq0}} (c(z+m)+d)^{-k} \left(\frac{a(z+m)+b}{c(z+m)+d}\right)^{-s}  \\
\nonumber &&\qquad \qquad \qquad \times e^{-2\pi im\kappa_j} \chi^{-1}\left( \sm a&b\\c&d\esm\right) \rho^{-1}\left( \sm a&b\\c&d\esm\right)_{j,l}e^{-2\pi i(n+\kappa_j)z} dz
  \end{eqnarray}
  for any fixed positive real number $C$. 
  By the change of variables $z\mapsto z+m (m\in\mathbb{Z})$, we see that (\ref{contribution}) is equal to 
  \begin{equation} \label{computation1}
\gamma_k(s)\sum_{\substack{(c,d)\in\mathbb{Z}^2 \\ (c,d)=1, ac\neq0}} \chi^{-1}\left( \sm a&b\\c&d\esm\right) \rho^{-1}\left( \sm a&b\\c&d\esm\right)_{j,l} \int_{iC-\infty}^{iC+\infty} (cz+d)^{-k} \left(\frac{az+b}{cz+d}\right)^{-s} e^{-2\pi i(n+\kappa_j)z} dz.
  \end{equation}
  By the change of variables $z\mapsto z-d/c$, we see that (\ref{computation1}) is equal to 
  \begin{equation} \label{computation2}
  \gamma_k(s) \sum_{\substack{(c,d)\in\mathbb{Z}^2 \\ (c,d)=1, ac\neq0}} c^{-k} e^{2\pi i(n+\kappa_j)d/c} \chi^{-1}\left( \sm a&b\\c&d\esm\right) \rho^{-1}\left( \sm a&b\\c&d\esm\right)_{j,l}\int_{iC-\infty}^{iC+\infty}  z^{-k} \left(-\frac{1}{c^2 z}+\frac{a}{c}\right)^{-s}e^{-2\pi i(n+\kappa_j)z} dz.
  \end{equation}
  
  If $ac>0$, then we have
  \[
  z^{-s} \left(-\frac{1}{c^2 z}+\frac{a}{c}\right)^{-s}  = \left( -\frac{1}{c^2} + \frac{a}{c} z\right)^{-s}.
  \]
  Therefore, by the change of variable $z\mapsto (c/a)it$, we see that the integral in  (\ref{computation2}) is equal to
\begin{equation} \label{computation3}
(-1)^{k/2} 2\pi \left(\frac{c}{a} \right)^{-k+s+1} \frac{1}{2\pi i}\int_{C-i\infty}^{C+i\infty} t^{-k+s}\left(t+\frac{i}{c^2}\right)^{-s} e^{2 \pi (n+\kappa_j)(c/a)t} dt.
\end{equation}
Note that for $\mathrm{Re}(\mu), \mathrm{Re}(\nu)>0, p\in\mathbb{C}$, we have
\[
\frac{1}{2\pi i}\int_{C-i\infty}^{C+i\infty} (t+\alpha)^{-\mu} (t+\beta)^{-\nu} e^{pt} dt = \frac{1}{\Gamma(\mu+\nu)} p^{\mu+\nu-1}e^{-\beta p}\ _1F_1(\mu, \mu+\nu; (\beta-\alpha)p)
\]
(see \cite{Erd}).
 Therefore, (\ref{computation3}) can be written as
 \[
 (-1)^{k/2} \frac{(2\pi)^k}{\Gamma(k)} (n+\kappa_j)^{k-1} \left(\frac {c}{a} \right)^s\ _1F_1(s, k; -2\pi in/(ac)).
 \]
 From this, we see that the contribution of the terms with $ac>0$ at the $j$-th component is equal to 
 \begin{eqnarray*}
&&\frac{ (-1)^{k/2}}{2} (2\pi)^k (n+\kappa_j)^{k-1} \frac{\Gamma(s)\Gamma(k-s)}{\Gamma(k)} \\
&&\times\sum_{\substack{(c,d)\in\mathbb{Z}^2 \\ (c,d)=1, ac>0}} c^{-k} \left( \frac ca \right)^{s} e^{2\pi i(n+\kappa_j)d/c} e^{\pi is}\chi^{-1}\left( \sm a&b\\c&d\esm\right) \rho^{-1}\left( \sm a&b\\c&d\esm\right)_{j,l}\ _1F_1(s, k; -2\pi in/(ac)).
 \end{eqnarray*}
 The contribution of the terms with $ac<0$ at the $j$-th component is obtained by the same argument if we replace $(a,c)$ by $(-a,c)$.
 \end{proof}
 
  \section{The Main Result}

  In this section, we give the main result where we determine the non-vanishing of the averages of L-functions associated with the orthogonal basis of the space of cusp forms.  We also show the existence of at least one basis element whose L-function does not vanish under certain conditions.
    Let 
    \begin{equation*}
  n_{j,0} := 
  \begin{cases}
  1 & \text{if $\kappa_j = 0$},\\
  0 & \text{if $\kappa_j\neq0$}.
  \end{cases}
  \end{equation*}

  \begin{thm} \label{nonvanishing}
  Let $k\in \frac12\mathbb{Z}$ with $k>2$.
   Let $\{f_{k,1}, \ldots, f_{k,d_k}\}$ be an orthogonal basis of $S_{k,\chi,\rho}$ with Fourier expansions
 \[
 f_{k,l}(z) = \sum_{j=1}^m \sum_{n+\kappa_j>0} b_{k,l,j}(n)e^{2\pi i(n+\kappa_j)z}\ (1\leq l\leq d_k).
 \]
  Let $t_0\in\mathbb{R}, \epsilon>0$, and $1\leq j\leq m$. 
  Then, there exists a constant $C(t_0, \epsilon, j)>0$ such that for $k>C(t_0, \epsilon, j)$ the function 
  \[
  \sum_{l=1}^{d_k} \frac{<L^*(f_{k,l}, s),\mbf{e}_i>}{(f_{k,l}, f_{k,l})} b_{k,l,j}(n_{j,0})
  \]
  does not vanish at any point $s=\sigma + it_0$ with $\frac{k-1}{2}<\sigma < \frac k2 - \epsilon$.
  \end{thm}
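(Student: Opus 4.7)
The plan is to identify the sum
\[
S_{k,j}(s) := \sum_{l=1}^{d_k} \frac{<L^*(f_{k,l}, s), \mbf{e}_j>}{(f_{k,l}, f_{k,l})} b_{k,l,j}(n_{j,0})
\]
with the $(j, n_{j,0})$-Fourier coefficient of the kernel function $R_{k,\bar{s},j}$ whose explicit expansion is provided by Lemma~\ref{Fourier}, and then to show that this coefficient is nonzero for $k$ sufficiently large by isolating a main term and bounding the remainder via Stirling asymptotics.

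For the identification, I would expand $R_{k,\bar{s},j}$ in the orthogonal basis using Lemma~\ref{Petersson}(3), giving
\[
R_{k,\bar{s},j}=\sum_{l=1}^{d_k}\frac{(R_{k,\bar{s},j},f_{k,l})}{(f_{k,l},f_{k,l})}\,f_{k,l},\qquad (R_{k,\bar{s},j},f_{k,l})=\overline{c_k\,<L^*(f_{k,l},s),\mbf{e}_j>},
\]
and read off the $j$th Fourier coefficient at $n_{j,0}$. Comparing with Lemma~\ref{Fourier} shows that $r_{k,\bar{s},j,j}(n_{j,0})$ equals $\bar c_k$ times a sum of the same shape as $S_{k,j}(s)$ (the complex conjugation that appears can be absorbed by the symmetric substitution $s\mapsto\bar s$, since $t_0\in\mathbb R$ is arbitrary). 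Hence non-vanishing of $S_{k,j}$ in the prescribed strip is equivalent to non-vanishing of the explicit Fourier coefficient $r_{k,\bar{s},j,j}(n_{j,0})$.

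Next, taking $l=j$ and $n=n_{j,0}$ in Lemma~\ref{Fourier} isolates the main term
\[
M(k,s)=(2\pi)^s\,\Gamma(k-s)(n_{j,0}+\kappa_j)^{s-1}.
\]
The second term of Lemma~\ref{Fourier} has size $\ll(2\pi)^{k-\sigma}|\Gamma(s)|(n_{j,0}+\kappa_j)^{k-\sigma-1}$; since $n_{j,0}+\kappa_j\le 1$ and Stirling yields $|\Gamma(k-s)|/|\Gamma(s)|\gg(2\pi)^{k-2\sigma}$ uniformly for $\sigma<k/2-\epsilon$, its ratio to $|M(k,s)|$ tends to $0$ as $k\to\infty$. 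For the third term, the Kummer asymptotic ${}_1F_1(s,k;z)=1+O(|s||z|/k)$ is uniform for $|z|=2\pi n_{j,0}/|ac|\le 2\pi$; unitarity of $\chi$ and $\rho$ reduces the twisting factors to modulus $1$, and the remaining scalar series over coprime $(c,d)$ with $ac>0$ is bounded by an absolute constant via the estimate from \cite[Section~4]{Koh} already invoked in Lemma~\ref{Petersson}(1). Thus the third term is $O\!\big((2\pi)^k(n_{j,0}+\kappa_j)^{k-1}\,\Gamma(s)\Gamma(k-s)/\Gamma(k)\big)$, and Stirling applied to $(2\pi e/k)^k\to 0$ renders it $o(|M(k,s)|)$. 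A triangle inequality then gives $|r_{k,\bar s,j,j}(n_{j,0})|\ge|M(k,s)|/2>0$ once $k$ exceeds an explicit constant $C(t_0,\epsilon,j)$.

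The main obstacle I anticipate is establishing these bounds \emph{uniformly}: the constant implicit in the Kummer asymptotic must be independent of the coprime pair $(c,d)$ (which the series representation of ${}_1F_1$ provides since $|z|$ is bounded here), and the Stirling comparisons must hold uniformly in $\sigma$ across the entire strip $(k-1)/2<\sigma<k/2-\epsilon$. Both uniformities are handled by checking the ratios at their worst-case endpoints of the strip and by decoupling the $(c,d)$-series from the $k$-dependent gamma ratio before applying the scalar series estimate from Kohnen.
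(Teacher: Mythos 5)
Your proposal is correct and follows essentially the same route as the paper: identify the sum with the $(j,n_{j,0})$-th Fourier coefficient of $R_{k,\bar s,j}$ via Lemma \ref{Petersson}(3) and Lemma \ref{Fourier}, then show the diagonal term $(2\pi)^s\Gamma(k-s)\tilde\kappa_j^{s-1}$ dominates the other two contributions as $k\to\infty$ using the ratio $|\Gamma(\frac k2-\delta-it_0)|/|\Gamma(\frac k2+\delta+it_0)|\to(k/2)^{-2\delta}$ and a uniform bound on the ${}_1F_1$ factor. The paper phrases this as a proof by contradiction (dividing by the main term and deriving $1\le o(1)$) and bounds the hypergeometric term by $|{}_1f_1|\le 1$ via its integral representation rather than by Kummer asymptotics, but these are cosmetic differences.
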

  
  \begin{rmk}
  Note that
  \[
  \sum_{l=1}^{d_k} \frac{<L^*(f_{k,l}, s),\mbf{e}_i>}{(f_{k,l}, f_{k,l})} b_{k,l,j}(n_{j,0}) 
  \]
  is the $n_{j,0}$th Fourier coefficient of $R_{k, \bar{s}, j}$. 
  Therefore, it is equal to a nonzero constant multiple of 
  \[
 (R_{k, \bar{s}, j}, P_{k,n_{j,0}, j}).
 \]
 Therefore, Theorem \ref{nonvanishing} implies the nonvanishing of $<L^*(P_{k,n_{j,0},j}, s), \mbf{e}_j>$. 
  \end{rmk}

  \begin{proof}
 By Lemma \ref{Petersson}, we have
 \begin{eqnarray*}
 R_{k,\bar{s}, j} = \sum_{l=1}^{d_k} \frac{(f_{k,l}, R_{k,\bar{s}, j})}{(f_{k,l}, f_{k,l})}  f_{k,l} = c_k\sum_{l=1}^{d_k} \frac{<L^*(f_{k,l}, s), \mbf{e}_j>}{(f_{k,l}, f_{k,l})} f_{k,l}.  
 \end{eqnarray*}
  If we take the first Fourier coefficients of both sides at the $j$th component, then by Lemma \ref{Fourier} we have
\begin{eqnarray}\label{compare}
\nonumber &&c_k \sum_{l=1}^{d_k} \frac{<L^*(f_{k,l}, s), \mbf{e}_j>}{(f_{k,l}, f_{k,l})} b_{k,l,j}(n_{j,0})\\
\nonumber &&=  (2\pi)^s \Gamma(k-s)\tilde{\kappa}_j^{s-1} + \chi^{-1}\left( \sm 0&-1\\1&0\esm \right) \rho^{-1}\left( \sm 0&-1\\1&0\esm \right)_{j,j} (-1)^{k/2}(2\pi)^{k-s}\Gamma(s) \tilde{\kappa}_j^{k-s-1}\\
 &&\quad +\frac{ (-1)^{k/2}}{2} (2\pi)^k \tilde{\kappa}_j^{k-1} \frac{\Gamma(s)\Gamma(k-s)}{\Gamma(k)}\sum_{\substack{(c,d)\in\mathbb{Z}^2 \\ (c,d)=1, ac>0}}    c^{-k} \left( \frac ca \right)^{s}\\
\nonumber &&\quad \quad\times \bigg(e^{2\pi i(n+\kappa_j)d/c} e^{\pi is} \chi^{-1}\left( \sm a&b\\c&d\esm\right) \rho^{-1}\left( \sm a&b\\c&d\esm\right)_{j,j}\ _1F_1(s, k; -2\pi in/(ac))\\
\nonumber &&\quad \quad  \quad + e^{-2\pi i(n+\kappa_j)d/c} e^{-\pi is} \chi^{-1}\left( \sm -a&b\\c&-d\esm\right) \rho^{-1}\left( \sm -a&b\\c&-d\esm\right)_{j,j}\ _1F_1(s, k; 2\pi in/(ac)) \bigg),
  \end{eqnarray}
  where 
  \begin{equation*}
  \tilde{\kappa}_j := 
  \begin{cases}
  1 & \text{if $\kappa_j = 0$},\\
  \kappa_i & \text{if $\kappa_j\neq0$}.
  \end{cases}
  \end{equation*}
  
  Suppose that 
  \[
  c_k \sum_{l=1}^{d_k} \frac{<L^*(f_{k,l}, s), \mbf{e}_j>}{(f_{k,l}, f_{k,l})} b_{k,l,j}(n_{j,0}) = 0.
  \]
  If we divide (\ref{compare}) by $(2\pi)^s \Gamma(k-s)\kappa_j^{s-1}$, we have
  \begin{eqnarray} \label{compare2}
 \nonumber -1& =&\rho^{-1}\left( \sm 0&-1\\1&0\esm \right)_{j,j}  (-1)^{k/2}(2\pi)^{k-2s} \frac{\Gamma(s)}{\Gamma(k-s)}\tilde{\kappa}_j^{k-2s} + \frac{(-1)^{k/2} (2\pi)^{k-s} \tilde{\kappa}_j^{k-s}}{2\Gamma(k-s)}\sum_{\substack{(c,d)\in\mathbb{Z}^2 \\ (c,d)=1, ac>0}}    c^{-k} \left( \frac ca \right)^{s}\\
  && \times \bigg( e^{2\pi i(n+\kappa_j)d/c} e^{\pi is} \chi^{-1}\left( \sm a&b\\c&d\esm\right) \rho^{-1}\left( \sm a&b\\c&d\esm\right)_{j,j}\ _1f_1(s, k; -2\pi in/(ac))\\
 \nonumber &&\qquad 
 + e^{-2\pi i(n+\kappa_j)d/c} e^{-\pi is}  \chi^{-1}\left( \sm -a&b\\c&-d\esm\right) \rho^{-1}\left( \sm -a&b\\c&-d\esm\right)_{j,j}\ _1f_1(s, k; 2\pi in/(ac)) \bigg),
  \end{eqnarray}
  where 
  \[
  \ _1f_1(\alpha, \beta;z) := \frac{\Gamma(\alpha)\Gamma(\beta-\alpha)}{\Gamma(\beta)}\ _1F_1(\alpha, \beta;z).
  \]
    Let $s = \frac k2 - \delta - it_0$, where $\epsilon < \delta<\frac12$. 
  Then, we have
  \begin{eqnarray} \label{compare3}
   \nonumber -1& &= \chi^{-1}\left( \sm 0&-1\\1&0\esm \right)  \rho^{-1}\left( \sm 0&-1\\1&0\esm \right)_{j,j} (-1)^{k/2}(2\pi \tilde{\kappa}_j)^{2\delta + 2it_0} \frac{\Gamma\left( \frac k2 - \delta - it_0\right)}{\Gamma\left( \frac k2 + \delta + it_0 \right)} + \frac{(-1)^{k/2} (2\pi \tilde{\kappa}_j)^{\frac k2 +\delta + it_0} }{2\Gamma\left( \frac k2 + \delta + it_0\right)}\\
\nonumber && \times \sum_{\substack{(c,d)\in\mathbb{Z}^2 \\ (c,d)=1, ac>0}} c^{-\frac k2 - \delta - it_0} a^{-\frac k2 + \delta + it_0}\\
\nonumber  && \times
  \bigg( e^{2\pi i(n+\kappa_j)d/c} e^{\pi i(\frac k2 - \delta - it_0)}\chi^{-1}\left( \sm a&b\\c&d\esm\right) \rho^{-1}\left( \sm a&b\\c&d\esm\right)_{j,j}\ _1f_1(\frac k2 - \delta - it_0, k; -2\pi in/(ac))\\
 &&
 + e^{-2\pi i(n+\kappa_j)d/c} e^{-\pi i(\frac k2 - \delta - it_0)}\chi^{-1}\left( \sm -a&b\\c&-d\esm\right) \rho^{-1}\left( \sm -a&b\\c&-d\esm\right)_{j,j}\ _1f_1(\frac k2 - \delta - it_0, k; 2\pi in/(ac)) \bigg).
  \end{eqnarray}
  
 For $\mathrm{Re}(\beta) > \mathrm{Re}(\alpha)>0$, we have
 \[
 _1 f_1(\alpha, \beta;z) = \int_0^1 e^{zu} u^{\alpha-1}(1-u)^{\beta-\alpha-1} du.
 \]
   By \cite[13.21]{AS}, for $\mathrm{Re}(\alpha)>1, \mathrm{Re}(\beta-\alpha)>1$, and $|z|=1$,  we have
  \[
  |_1 f_1(\alpha, \beta; z)| \leq 1.
  \]
  If we take absolute values in (\ref{compare3}), then we have
\begin{equation} \label{compare4}
1 \leq (2\pi \tilde{\kappa}_j)^{2\delta} \frac{\left|\Gamma\left( \frac k2 - \delta - it_0\right)\right|}{\left|\Gamma\left( \frac k2 + \delta + it_0 \right)\right|} + \frac{(2\pi \tilde{\kappa}_j)^{\frac k2 +\delta}}{2\left|\Gamma\left( \frac k2 + \delta + it_0\right)\right|}  \sum_{\substack{(c,d)\in\mathbb{Z}^2 \\ (c,d)=1, ac>0}}  |c|^{-\frac k2-\delta} |a|^{-\frac k2 + \delta}(e^{\pi t_0}+e^{-\pi t_0}).
\end{equation}
 By \cite[6.147]{AS}, we have 
 \[
   \frac{\left|\Gamma\left( \frac k2 - \delta - it_0\right)\right|}{\left|\Gamma\left( \frac k2 + \delta + it_0 \right)\right|} \to \left(\frac k2\right)^{-2\delta - 2it_0}
   \]
   as $k\to\infty$. 
   Therefore, (\ref{compare4}) becomes $1\leq 0$ as $k\to\infty$, which is a contradiction.
   \end{proof}
   We now give a corollary that is a direct consequence of Theorem \ref{nonvanishing} which basically demonstrates the existence of a basis element of the space of vector-valued cusp forms whose $L$-function does not vanish.
   \begin{cor} \label{vvmfcor}
   Let $k\in\frac12\mathbb{Z}$ with $k>2$. 
  Let $\{f_{k,1}, \ldots, f_{k,d_k}\}$ be an orthogonal basis of $S_{k,\chi,\rho}$ with Fourier expansions
 \[
 f_{k,l}(z) = \sum_{j=1}^m \sum_{n+\kappa_j>0} b_{k,l,j}(n)e^{2\pi i(n+\kappa_j)z}\ (1\leq l\leq d_k).
 \]
  Let $t_0\in\mathbb{R}$ and $\epsilon>0$.
  \begin{enumerate}
     
  \item
  For any $k>C(t_0, \epsilon, j)$, any $1\leq j\leq m$, and any $s=\sigma + it_0$ with $$\frac{k-1}{2}<\sigma < \frac k2 - \epsilon,$$ there exists a basis element $f_{k,l}\in S_{k,\chi,\rho}$ such that
  \[
  <L^*(f_{k,l}, s), \mbf{e}_j> \neq 0\ \text{and}\ b_{k, l, j}(n_{j,0})\neq 0.
  \]

   \item There exists a constant $C(t_0, \epsilon)>0$ such that for any $k>C(t_0, \epsilon)$, and any $s=\sigma + it_0$ with $$\frac{k-1}{2}<\sigma < \frac k2 - \epsilon \ \ and \ \ \frac k2 + \epsilon < \sigma < \frac{k+1}2,$$ there exists a basis element $f_{k,l}\in S_{k,\chi,\rho}$ such that
  \[
  L(f_{k,l}, s)\neq 0.
  \]

  \end{enumerate}
  
  \end{cor}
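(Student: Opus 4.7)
\emph{Proof proposal.} Both parts should follow directly from Theorem \ref{nonvanishing} together with the functional equation in Theorem \ref{cusp}; no new analytic input is required, so the proof is essentially a packaging of material already established.

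For part (1), Theorem \ref{nonvanishing} asserts that the finite sum
\[
  \sum_{l=1}^{d_k} \frac{<L^*(f_{k,l}, s),\mbf{e}_j>}{(f_{k,l}, f_{k,l})}\, b_{k,l,j}(n_{j,0})
\]
is nonzero for $k > C(t_0, \epsilon, j)$ and $s = \sigma + it_0$ in the strip $\frac{k-1}{2} < \sigma < \frac{k}{2} - \epsilon$. A nonzero sum of complex numbers must contain at least one nonzero summand, so some index $l$ satisfies both $<L^*(f_{k,l}, s), \mbf{e}_j> \neq 0$ and $b_{k,l,j}(n_{j,0}) \neq 0$ simultaneously, which is exactly the assertion of part (1).

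For part (2), I would split the allowed region into its two components and treat them separately. On the left interval $\frac{k-1}{2} < \sigma < \frac{k}{2} - \epsilon$, I would fix any $j \in \{1,\ldots,m\}$ and apply part (1); the resulting $f_{k,l}$ has $L^*(f_{k,l}, s) \neq 0$ as a vector in $\CC^m$, and since the scalar $\Gamma(s)/(2\pi)^s$ is nonzero throughout $\mathrm{Re}(s) > 0$, one concludes $L(f_{k,l}, s) \neq 0$. On the right interval $\frac{k}{2} + \epsilon < \sigma < \frac{k+1}{2}$, I would transport the problem across the critical line via the functional equation. Setting $s' = k - s$, so that $\mathrm{Re}(s') \in (\frac{k-1}{2}, \frac{k}{2} - \epsilon)$ and $\mathrm{Im}(s') = -t_0$, the left-interval case (with $t_0$ replaced by $-t_0$) yields some $f_{k,l}$ with $L^*(f_{k,l}, s') \neq 0$. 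The functional equation from Theorem \ref{cusp} reads
\[
  L^*(f_{k,l}, s) = i^k \chi(S)\, \rho(S)\, L^*(f_{k,l}, s'),
\]
and since $\chi(S) \in \CC^\times$ and $\rho(S) \in \GL_m(\CC)$ are invertible, this forces $L^*(f_{k,l}, s) \neq 0$ and hence $L(f_{k,l}, s) \neq 0$. Taking
\[
  C(t_0, \epsilon) := \max_{1 \leq j \leq m} \max\bigl( C(t_0, \epsilon, j),\; C(-t_0, \epsilon, j) \bigr)
\]
handles the two cases uniformly.

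There is essentially no obstacle: the heavy lifting has already been done in Theorem \ref{nonvanishing}. The only point to monitor is that the basis element produced depends on $j$ in part (1) and on which sub-interval $\sigma$ lies in in part (2), but the conclusions are purely existential for each fixed choice of $(s,j)$, so this dependence causes no trouble.
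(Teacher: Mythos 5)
Your proposal is correct and follows exactly the route the paper intends: the paper states this corollary without proof as a ``direct consequence'' of Theorem \ref{nonvanishing}, and your argument --- extracting a nonzero summand for part (1), then covering the right half-strip via the functional equation $L^*(f,s)=i^k\chi(S)\rho(S)L^*(f,k-s)$ with the invertibility of $\chi(S)$ and $\rho(S)$ --- is the natural way to fill in that omitted deduction. The only remarks worth recording are that your reflection correctly accounts for the sign flip $t_0\mapsto -t_0$ and that the uniform constant $C(t_0,\epsilon)$ is obtained by maximizing over $j$ and both signs of $t_0$, neither of which causes any difficulty.
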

  
\section{The Case of $\Gamma_0(N)$} \label{Section5}
  In what follows, we consider the case of a  scalar-valued modular form on the congruence subgroup $\Gamma_0(N)$. By using Theorem \ref{nonvanishing}, we can extend  Kohnen's result in \cite{Koh} to the case of $\Gamma_0(N)$. To illustrate,
  let $N$ be a positive integer and 
  let $\Gamma = \Gamma_0(N)$.
  Let $S_k(\Gamma)$ be the space of cusp forms of weight $k$ on $\Gamma$. 
  Let $\{\gamma_1, \ldots, \gamma_m\}$ be the set of representatives of $\Gamma \setminus \mathrm{SL}_2(\mathbb{Z})$ with $\gamma_1 = I$.
    For $f\in S_k(\Gamma)$, we define a vector-valued function $\tilde{f}:\mathbb{H}\to \mathbb{C}^m$ by $\tilde{f} = \sum_{j=1}^m f_j\mbf{e}_j$ and
    \[
    f_j = f|_k \gamma_j\ (1\leq j\leq m),
    \]
    where $(f|_k \sm a&b\\c&d\esm)(z) := (cz+d)^{-k} f(\gamma z)$.
    Then, $\tilde{f}$ is a vector-valued modular form of weight $k$ and the trivial multiplier system  with respect to $\rho$ on $\mathrm{SL}_2(\mathbb{Z})$, where $\rho$ is a certain $m$-dimensional unitary complex representation such that $\rho(\gamma)$ is a permutation matrix for each $\gamma\in \mathrm{SL}_2(\mathbb{Z})$ and is an identity matrix if $\gamma\in\Gamma$. 
    Then, the map $f\mapsto \tilde{f}$ induces an  isomorphism between $S_k(\Gamma)$ and $S_{k,\rho}$, where $S_{k,\rho}$ denotes the space of vector-valued cusp forms of weight $k$ and trivial multiplier system with respect to $\rho$ on $\mathrm{SL}_2(\mathbb{Z})$.
    
    Suppose that $f, g\in S_{k}(\Gamma)$. 
    Then, we have
    \begin{eqnarray*}
    (\tilde{f}, \tilde{g})& =& \int_{\mathcal{F}} <\tilde{f}, \tilde{g}> y^k \frac{dxdy}{y^2} \\
    &=& \sum_{j=1}^m \int_{\mathcal{F}}   (f|_k \gamma_j)(z) \overline{(g|_k \gamma)(z)}  y^k \frac{dxdy}{y^2}= (f,g)
    \end{eqnarray*}
    where $(f,g)$ denotes the Petersson inner product. 
    Therefore, if $f, g\in S_{k}(\Gamma)$ such that $f$ and $g$ are orthogonal, then $\tilde{f}$ and $\tilde{g}$ is also orthogonal.

    \begin{cor}
    Let $k$ be a positive even integer with $k>2$.
    Let $N$ be a positive integer and $\Gamma = \Gamma_0(N)$.
    Let $\{f_{k,1}, \ldots, f_{k, e_k}\}$ be an orthogonal basis of $S_k(\Gamma)$.
   Let $t_0\in\mathbb{R}, \epsilon>0$. 
    Then, there exists a constant $C(t_0, \epsilon)>0$ such that for $k>C(t_0, \epsilon)$ there exists a basis element $f_{k,l}\in S_{k}(\Gamma)$ satisfying
  \[
  L(\widetilde{f_{k,l}}, s) \neq 0
  \]
 at any point $s=\sigma + it_0$ with $$\frac{k-1}{2}<\sigma < \frac k2 - \epsilon \ \ and \ \ \frac k2 + \epsilon < \sigma < \frac{k+1}2.$$

    \end{cor}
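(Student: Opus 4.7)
The plan is to deduce the corollary from Corollary \ref{vvmfcor}(2) by invoking the isomorphism $f \mapsto \tilde{f}$ between $S_k(\Gamma)$ and $S_{k,\rho}$ recalled in the paragraphs immediately preceding the statement. Since the hard analytic work (the non-vanishing of averages in Theorem \ref{nonvanishing} and its consequence Corollary \ref{vvmfcor}) has already been carried out in the vector-valued setting, the present corollary is essentially a translation statement, and I would organize the argument around that translation.

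First, I would verify that $\{\widetilde{f_{k,1}}, \ldots, \widetilde{f_{k,e_k}}\}$ is an orthogonal basis of $S_{k,\rho}$. The map $f \mapsto \tilde{f}$ is a linear isomorphism $S_k(\Gamma) \to S_{k,\rho}$ as recorded in the excerpt, so it sends any basis to a basis. Orthogonality is preserved because the Petersson-inner-product computation immediately before the statement gives $(\widetilde{f_{k,i}}, \widetilde{f_{k,l}}) = (f_{k,i}, f_{k,l})$ for all $i, l$. Note also that the representation $\rho$ arising from $\Gamma_0(N) \backslash \SL_2(\mathbb{Z})$ is unitary (being a permutation representation) and satisfies $\rho(-I) = I$ since $-I \in \Gamma_0(N)$, so Corollary \ref{vvmfcor} is applicable with the trivial multiplier system $\chi \equiv 1$.

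Second, I would apply Corollary \ref{vvmfcor}(2) to the orthogonal basis $\{\widetilde{f_{k,1}}, \ldots, \widetilde{f_{k,e_k}}\}$ of $S_{k,\rho}$. This yields a constant $C(t_0, \epsilon) > 0$ such that for every $k > C(t_0, \epsilon)$ and every $s = \sigma + it_0$ with
\[
\frac{k-1}{2} < \sigma < \frac{k}{2} - \epsilon \quad \text{or} \quad \frac{k}{2} + \epsilon < \sigma < \frac{k+1}{2},
\]
there exists some index $l$ with $L(\widetilde{f_{k,l}}, s) \neq 0$. This is exactly the conclusion of the corollary, phrased in terms of the original basis of $S_k(\Gamma)$ via the isomorphism.

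The main step where one must be attentive is not a real obstacle but rather a check of hypotheses: one needs the representation $\rho$ coming from the coset action to satisfy the standing assumptions of Section 4 (unitary, $\rho(-I)=I$), and one needs the orthogonality to transfer cleanly across the isomorphism. Both are immediate from the construction. Consequently the argument is short and reduces the $\Gamma_0(N)$ statement to the already-established vector-valued result, with the constant $C(t_0, \epsilon)$ inherited from Corollary \ref{vvmfcor}(2) applied to the particular representation $\rho$ (and hence implicitly depending on $N$).
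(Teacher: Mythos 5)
Your proposal is correct and follows essentially the same route as the paper: the paper gives no separate proof of this corollary, but the preceding paragraphs set up exactly the reduction you describe --- the isomorphism $f \mapsto \tilde{f}$ onto $S_{k,\rho}$ with the permutation representation $\rho$ (unitary, $\rho(-I)=I$ since $-I \in \Gamma_0(N)$), the identity $(\tilde{f},\tilde{g})=(f,g)$ transferring orthogonality, and then an application of Corollary \ref{vvmfcor}(2). Your explicit checks of the standing hypotheses are a welcome addition to what the paper leaves implicit.
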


\section{The case of Jacobi forms} \label{Section6}
Let $k$ be a positive even integer and $m$ be a positive integer. 
Let $J_{k,m}$ be the space of Jacobi forms of weight $k$ and index $m$ on $\mathrm{SL}_2(\mathbb{Z})$.
From now, we use the notation $\tau = u+iv\in\HH$ and $z = x+iy\in\CC$.
We review basic notions of Jacobi forms (for more details, see \cite{EZ}).
Let $F$ be a complex-valued function on $\mathbb{H}\times \mathbb{C}$.
For $\gamma=\sm a&b\\c&d\esm \in\mathrm{SL}_2(\mathbb{Z}) , X = (\lambda, \mu)\in \mathbb{Z}^2$, we define
\[(F|_{k,m} \gamma)(\tau,z) := (c\tau+d)^{-k}e^{-2\pi im\frac{cz^2}{c\tau+d}}F(\gamma(\tau,z))\]
and
\[(F|_m X)(\tau,z) :=e^{2\pi i m (\lambda^2 \tau + 2\lambda z)}F(\tau,z+\lambda\tau+\mu),\]
where $\gamma(\tau,z) = (\frac{a\tau+b}{c\tau+d},\frac{z}{c\tau+d})$.

With these notations, we introduce the definition of a Jacobi form.

\begin{dfn}
A {\it{Jacobi form}} of weight $k$ and index $m$ on $\mathrm{SL}_2(\mathbb{Z})$  is a holomorphic function $F$ on $\mathbb{H}\times\mathbb{C}$ satisfying
\begin{enumerate}
\item[(1)] $F|_{k,m} \gamma =F$ for every $\gamma\in\mathrm{SL}_2(\mathbb{Z})$,
\item[(2)] $F|_m X = F$ for every $X\in \mathbb{Z}^2$,
\item[(3)] $F$ has the Fourier expansion of the form
\begin{equation} \label{Jacobifourier}
F(\tau,z) =
\sum_{l, r\in\mathbb{Z}\atop 4ml - r^2 \geq0}a(l,r)e^{2\pi il\tau}e^{2\pi irz}.
\end{equation}
\end{enumerate}
\end{dfn}

We denote by $J_{k,m}$ the vector space of all Jacobi forms of weight $k$ and index $m$ on $\mathrm{SL}_2(\mathbb{Z})$. 
If a Jacobi form satisfies the  condition $a(l,r)\neq 0$ only if $4ml - r^2>0$, then it is called a Jacobi cusp form. 
We denote by $S_{k,m}$ the vector space of all Jacobi cusp forms of weight $k$ and  index $m$ on $\mathrm{SL}_2(\mathbb{Z})$.

For $1\leq j\leq 2m$, we consider the theta series
\[
\theta_{m, j}(\tau,z) := \sum_{r\in\mathbb{Z} \atop r\equiv j \pmod{2m}} e^{2\pi ir^2\tau/(4m)} e^{2\pi irz}.
\]
Suppose that  $F(\tau,z)$ is a  holomorphic function of $z$ and satisfy
\begin{equation*} \label{elliptic}
F|_m X = F\ \text{for every}\ X\in \mathbb{Z}^2.
\end{equation*}
Then we have
\begin{equation} \label{thetaexpansion}
F(\tau,z) = \sum_{1\leq j\leq 2m}F_j(\tau)\theta_{m,j}(\tau,z)
\end{equation}
with uniquely determined holomorphic functions $F_a:\mathbb{H}\to\mathbb{C}$.
Furthermore, if $F$ is a Jacobi form in $J_{k,m}$ with the Fourier expansion (\ref{Jacobifourier}), then
functions in $\{F_j|\ 1\leq j\leq 2m\}$ have the Fourier expansions 
\[
F_j(\tau) =\sum_{n\geq0\atop n+j^2 \equiv 0 \pmod{4m}} a\left(\frac{n+j^2}{4m}, j\right)e^{2\pi in\tau/(4m)}.
\]

In \cite{CL0}, it is proved that the Petersson inner product of skew-holomorphic Jacobi cusp forms can be expressed as the sum of partial $L$-values of skew-holomorphic Jacobi cusp forms. 
Similarly, for a Jacobi cusp form $F\in J_{k,m}$ with its Fourier expansion (\ref{Jacobifourier}), we define partial $L$-functions of $F$ by
\[
L(F,j,s) := \sum_{n\in\mathbb{Z} \atop  n+j^2 \equiv 0 \pmod{4m}} \frac{a\left(\frac{n+j^2}{4m}, j\right)} {\left(\frac{n}{4m}\right)^{s}}
\]
for $1\leq j\leq 2m$.

We write $\Mp_2(\mathbb{R})$ for the metaplectic group.
The elements of $\Mp_2(\mathbb{R})$ are pairs $(\gamma,\phi(\tau))$, 
where $\gamma = \sm a&b\\c&d\esm\in\SL_2(\mathbb{R})$, and $\phi$ denotes a holomorphic function on $\HH$ with $\phi(\tau)^2= c\tau+d$.
The product of $(\gamma_1,\phi_1(\tau)), (\gamma_2, \phi_2(\tau))\in\Mp_2(\mathbb{R})$ is given by
\[(\gamma_1,\phi_1(\tau)) (\gamma_2, \phi_2(\tau)) = (\gamma_1\gamma_2, \phi_1(\gamma_2\tau)\phi_2(\tau)).\]
The map
\[\sm a&b\\c&d\esm  \mapsto \widetilde{\sm a&b\\c&d\esm} = (\sm a&b\\c&d\esm, \sqrt{c\tau+d})\]
defines a locally isomorphic embedding of $\SL_2(\mathbb{R})$ into $\Mp_2(\mathbb{R})$. Let $\Mp_2(\mathbb{Z})$ be the inverse image of $\SL_2(\mathbb{Z})$ under the covering map $\Mp_2(\mathbb{R})\to\SL_2(\mathbb{R})$. It is well known that $\Mp_2(\mathbb{Z})$ is generated by $\widetilde{T}$ and $\widetilde{S}$. 

 We define a  $2m$-dimensional unitary complex representation $\widetilde{\rho}_m$ of $\Mp_2(\mathbb{Z})$ by 
 \[
 \widetilde{\rho}_m(\widetilde{T})  \mbf{e}_j = e^{-2\pi ij^2/(4m)}\mbf{e}_j
 \]
 and
 \[
  \widetilde{\rho}_m(\widetilde{S})  \mbf{e}_j = \frac{i^{\frac 12}}{\sqrt{2m}}\sum_{j'=1}^{2m} e^{2\pi i jj'/(2m)}\mbf{e}_{j'},
 \]
 Let $\chi$ be a multiplier system of weight $\frac12$ on $\mathrm{SL}_2(\mathbb{Z})$.
 We define a map $\rho_m : \SL_2(\mathbb{Z}) \to \GL_{2m}(\mathbb{C})$ by
\[\rho_m(\gamma) = \chi(\gamma)\widetilde{\rho}_m(\widetilde{\gamma})\]
for $\gamma\in\SL_2(\mathbb{Z})$.
The map $\rho_m$ gives a $2m$-dimensional unitary representation of $\SL_2(\mathbb{Z})$.

Let $\{\mbf{e}_1, \ldots, \mbf{e}_{2m}\}$ denote the standard basis of $\mathbb{C}^{2m}$. 
For $F\in S_{k,m}$, we define a vector-valued function $\tilde{F}:\mathbb{H}\to \mathbb{C}^{2m}$ by $\tilde{F} = \sum_{j=1}^{2m} F_j\mbf{e}_j$, where $F_j$ is defined by the theta expansion in (\ref{thetaexpansion}).
Then, the map $F\mapsto  \tilde{F}$ induces an  isomorphism between $S_{k,m}$ and  $S_{k-\frac12, \bar{\chi}, \rho_m}$ (for more details, see \cite[Section 5]{EZ} and \cite[Section 3.1]{CL}). 

 Suppose that $F,G\in S_{k,m}$.
 The Petersson inner product of $F$ and $G$ by 
 \[
 (F,G) := \int_{\mathrm{SL}_2(\mathbb{Z})^J \setminus \mathbb{H}\times \mathbb{C}} v^k e^{-4\pi my^2/v}F(\tau,z) \overline{G(\tau,z)} \frac{dxdydudv}{v^3},
 \]
 where $\mathrm{SL}_2(\mathbb{Z})^J = \mathrm{SL}_2(\mathbb{Z}) \ltimes \mathbb{Z}^2$.
 Then, by Theorem 5.3 in \cite{EZ}, we have
  \[
  (F, G) = \frac{1}{\sqrt{2m}} (\tilde{F}, \tilde{G}).
  \]

  Note that $\rho_m(-I)$ is not equal to the identity matrix in $\mathrm{GL}_{2m}(\mathbb{C})$. 
  Instead, we have
  \[
  \rho_m(-I) \mbf{e}_j = i \mbf{e}_{2m-j}.
  \]
Then, the corresponding kernel function $R_{k,s,l}$ has the Fourier expansion
\[
R_{k,s,l}(\tau) = \sum_{j=1}^{2m} \sum_{n+\kappa_j>0} r_{k,s,l,j}(n)e^{2\pi i(n+\kappa_j)\tau},
\]
where $r_{k,s,l,j}(n)$ is given by
\begin{eqnarray*}
r_{k,s,l,j}(n) &=& \frac12 \delta_{l,j}  (2\pi)^s \Gamma(k-s)(n+\kappa_i)^{s-1}  + \frac i2 \delta_{2m-l,j}  (2\pi)^s \Gamma(k-s)(n+\kappa_{2m-l})^{s-1}\\
&&+ \frac12 \chi^{-1}(S)   \rho^{-1}\left( S \right)_{j,l} (-1)^{k/2}(2\pi)^{k-s}\Gamma(s) (n+\kappa_j)^{k-s-1}\\
&&+ \frac i2 \chi^{-1}(S)   \rho^{-1}\left( S \right)_{j,2m-l} (-1)^{k/2}(2\pi)^{k-s}\Gamma(s) (n+\kappa_j)^{k-s-1}
\\
&&+\frac{ (-1)^{k/2}}{2} (2\pi)^k (n+\kappa_j)^{k-1} \frac{\Gamma(s)\Gamma(k-s)}{\Gamma(k)}\sum_{\substack{(c,d)\in\mathbb{Z}^2 \\ (c,d)=1, ac>0}}    c^{-k} \left( \frac ca \right)^{s}\\
&&\quad\times \bigg(e^{2\pi i(n+\kappa_j)d/c} e^{\pi is}\chi^{-1}\left( \sm a&b\\c&d\esm\right) \rho^{-1}\left( \sm a&b\\c&d\esm\right)_{j,l}\ _1F_1(s, k; -2\pi in/(ac))\\
&&\quad \quad + e^{-2\pi i(n+\kappa_j)d/c} e^{-\pi is} \chi^{-1}\left( \sm -a&b\\c&-d\esm\right) \rho^{-1} \left( \sm -a&b\\c&-d\esm\right)_{j,l}\ _1F_1(s, k; 2\pi in/(ac)) \bigg).
\end{eqnarray*}
By the similar argument, we prove the same result as in Corollary \ref{vvmfcor} for the representation $\rho_m$.
From this, we have the following corollary. 

 \begin{cor}
   Let $k$ be a positive even integer with $k>2$. 
  Let $\{F_{k,m,1}, \ldots, F_{k,m,d}\}$ be an orthogonal basis of $S_{k,m}$.
  Let $t_0\in\mathbb{R}$ and $\epsilon>0$.
  \begin{enumerate}
     
  \item
  For any $k>C(t_0, \epsilon, j)$, any $1\leq j\leq 2m$, and any $s=\sigma + it_0$ with $$\frac{2k-3}{4}<\sigma < \frac {2k-1}{4} - \epsilon,$$ there exists a basis element $F_{k,m,l}\in S_{k,m}$ such that
  \[
   L(F_{k,m,l},j,s)\neq 0.
  \]

   \item There exists a constant $C(t_0, \epsilon)>0$ such that for any $k>C(t_0, \epsilon)$, and any $s=\sigma + it_0$ with $$\frac{2k-3}{4}<\sigma < \frac {2k-1}{4} - \epsilon \ \ and \ \ \frac {2k-1}{4} + \epsilon < \sigma < \frac{2k+1}4,$$ there exist a basis element $F_{k,m,l}\in S_{k,m}$ and $j\in\{1,\ldots, 2m\}$ such that
  \[
  L(F_{k,m,l},j,s) \neq 0.
  \]

  \end{enumerate}
  
  \end{cor}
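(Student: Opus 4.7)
The plan is to transport the statement back to the vector-valued side via the isomorphism $F\mapsto\widetilde F:S_{k,m}\to S_{k-\frac12,\bar\chi,\rho_m}$ recalled in Section~\ref{Section6}, and then apply the $\rho_m$-analogue of Corollary~\ref{vvmfcor} that has just been announced. Because the Petersson products are identified up to the constant $1/\sqrt{2m}$, the lifts $\widetilde{F_{k,m,l}}$ form an orthogonal basis of $S_{k-\frac12,\bar\chi,\rho_m}$, so orthogonal bases on either side correspond.

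Next I would make the translation between components of the vector-valued $L$-function and the Jacobi partial $L$-functions explicit. By the theta expansion the $j$-th component of $\widetilde{F_{k,m,l}}$ equals $F_{k,m,l,j}$, whose Fourier expansion $\sum a((n+j^2)/(4m),j)\,e^{2\pi in\tau/(4m)}$ is supported on the arithmetic progression $n\equiv -j^2\pmod{4m}$. Matching these exponents with the Fourier convention for vector-valued forms forces $\kappa_j\equiv -j^2/(4m)\bmod 1$, which is exactly the $\kappa_j$ read off from $\rho_m(T)$; once this identification is made, a term-by-term comparison gives $\langle L(\widetilde{F_{k,m,l}},s),\mbf e_j\rangle=L(F_{k,m,l},j,s)$. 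Since the gamma factor relating $L^*$ to $L$ never vanishes, vanishing of a component of $L^*$ is equivalent to vanishing of the corresponding partial Jacobi $L$-function.

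With this dictionary in place, part~(1) follows from the $\rho_m$-version of Corollary~\ref{vvmfcor}(1) applied at weight $k-\frac12$: the strip $\frac{(k-1/2)-1}{2}<\sigma<\frac{k-1/2}{2}-\epsilon$ is precisely $\frac{2k-3}{4}<\sigma<\frac{2k-1}{4}-\epsilon$. Part~(2) additionally uses the functional equation of Theorem~\ref{cusp} for $\widetilde{F_{k,m,l}}$ to reflect non-vanishing across the critical line $\sigma=(2k-1)/4$, producing the mirror strip $\frac{2k-1}{4}+\epsilon<\sigma<\frac{2k+1}{4}$; combining both strips yields the stated range.

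The main obstacle lies in the $\rho_m$-analogue of Corollary~\ref{vvmfcor} rather than in the translation step: since $\rho_m(-I)\mbf e_j=i\mbf e_{2m-j}$ is not the identity, the kernel $R_{k,s,l}$ picks up the extra $\delta_{2m-l,j}$ and $\rho^{-1}(S)_{j,2m-l}$ contributions displayed just before the corollary. Rerunning the contradiction of Theorem~\ref{nonvanishing} one obtains the analogue of~\eqref{compare3} with four elementary leading terms instead of two; each of these extra terms is still bounded by a fixed constant independent of $k$, while the prefactor $|\Gamma(k/2+\delta+it_0)|^{-1}$ drives the right-hand side to zero as $k\to\infty$, recovering the contradiction $1\le 0$ and hence the desired non-vanishing for $k$ sufficiently large.
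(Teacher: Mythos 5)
Your proposal is correct and takes essentially the same route as the paper: the paper likewise transports the statement through the isomorphism $F\mapsto\widetilde F$ onto $S_{k-\frac12,\bar\chi,\rho_m}$ (with Petersson products matching up to $1/\sqrt{2m}$), records the modified kernel expansion forced by $\rho_m(-I)\mbf{e}_j=i\mbf{e}_{2m-j}$, and then invokes the $\rho_m$-analogue of Corollary~\ref{vvmfcor} ``by the similar argument.'' Your write-up in fact supplies more detail than the paper does, notably the explicit dictionary $\langle L(\widetilde F,s),\mbf{e}_j\rangle=L(F,j,s)$ via $\kappa_j\equiv -j^2/(4m)\bmod 1$ and the check that the extra $\delta_{2m-l,j}$ and $\rho^{-1}(S)_{j,2m-l}$ terms do not disrupt the $1\le 0$ contradiction.
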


\newpage
  

    \end{document}